\numberwithin{equation}{section}
\newtheorem{theorem}{Theorem}[section]
\newtheorem{corollary}[theorem]{Corollary}
\newtheorem{definition}[theorem]{Definition}
\newtheorem{example}[theorem]{Example}
\newtheorem{remark}[theorem]{Remark}
\newenvironment{proof}[1][Proof]{\noindent\textbf{#1.} }{\ \rule{0.5em}{0.5em}}
\begin{document}

\title{Fundamental Theorem of Asset Pricing under fixed and proportional transaction costs}
\author{Martin Brown and Tomasz Zastawniak\thanks{Corresponding author; email:
\texttt{tomasz.zastawniak@york.ac.uk}.}\medskip\\{\small Department of Mathematics, University of York}\\{\small Heslington, York YO10~5DD, United Kingdom}}
\date{\cleanlookdateon\today}
\maketitle

\begin{abstract}
We show that the lack of arbitrage in a model with both fixed and proportional
transaction costs is equivalent to the existence of a family of absolutely
continuous single-step probability measures, together with an adapted process
with values between the bid-ask spreads that satisfies the martingale property
with respect to each of the measures. This extends Harrison and Pliska's
classical Fundamental Theorem of Asset Pricing to the case of combined fixed
and proportional transaction costs.

\end{abstract}

\section{Introduction}

The Fundamental Theorem of Asset Pricing, linking the lack of arbitrage with
the existence of a risk neutral probability measure, has been studied for a
diverse range of models of the financial market. The first to establish the
result for discrete time models with finite state space were Harrison and
Pliska \cite{HarPli1981}. Dalang, Morton and Willinger \cite{DalMorWil1990}
extended the theorem to the case of infinite state space, and Delbaen and
Schachermayer \cite{DelSch1994}, \cite{DelSch1998} to continuous-time models.

The above classical results apply to frictionless models. Harrison and
Pliska's result was extended to models with friction in the form of
proportional transaction costs (represented as bid-ask spreads) by Jouini and
Kallal \cite{JouKal1995}, Kabanov and Stricker \cite{KabStr2001} and Ortu
\cite{Ort2001}. Furthermore, Roux \cite{Rou2011} included interest rate
spreads in addition to proportional transaction costs. Similarly, the result
by Dalang et~al.\ involving an infinite state space was extended to models
with proportional transaction costs by Zhang and Deng \cite{ZanDen2002},
Kabanov, R\'{a}sonyi and Stricker \cite{KabRasStr2002}, and Schachermayer
\cite{Sch2004}.

Under fixed transaction costs, to our best knowledge, the equivalence between
the absence of arbitrage and the existence of risk neutral measures has so far
been studied in just one paper, by Jouini, Kallal and Napp
\cite{JouKalNap2001}.

The long-standing question of extending the Fundamental Theorem of Asset
Pricing to cover the situation when both fixed and proportional transaction
costs apply simultaneously is addressed in the present paper. We use the term
`combined costs' as shorthand when referring to this case. Such costs are
ubiquitous in the markets, hence it is important to be able to characterise
the lack of arbitrage in their presence. In Theorem~\ref{Thm:0sjfd67fbd} we
show that the absence of arbitrage in a market with combined costs is
equivalent to the existence of a family of single-step probability measures
absolutely continuous with respect to (but not necessarily equivalent to) the
physical probability, along with a martingale with respect to such a family of
measures (as defined in Section~\ref{Sect:mfd61oa0}) and taking values between
the bid and ask prices. In doing so, we extend the classical result of
Harrison and Pliska \cite{HarPli1981} for a finite state space to the case of
combined costs. Later on, in Corollary~\ref{Cor:f8f56sgfsmac} we provide
another equivalent condition for the lack of combined-cost arbitrage, namely
the existence of an embedded arbitrage-free model with fixed costs.

The technical difficulties inherent in the problem solved here are due to a
combination of two factors. On the one hand, proportional costs mean that the
absence of arbitrage in the full multi-step model is not equivalent to the
condition that every single-step submodel should be arbitrage free (even
though such an equivalence holds in frictionless models as well as under fixed
costs), preventing an argument by reduction to a single step. On the other
hand, fixed costs imply that the set of solvent portfolios lacks convexity.
While these difficulties have been tackled separately in the context of
proportional costs and, respectively, fixed costs only, they require fresh
ideas to handle their compounded effect. This is achieved in the proof of
Theorem~\ref{Thm:0sjfd67fbd}.

Finally, we mention the recent paper by Lepinette and Tran \cite{LepTra2017},
in which arbitrage under market friction involving lack of convexity (and
including the case of simultaneous fixed and proportional costs) has been
considered. In that paper the absence of asymptotic arbitrage is characterised
by the existence of a so-called equivalent separating probability measure.
However, no link is made with risk neutral probabilities, by contrast to the
present paper. In Example~\ref{Exl:nf7am0ax} we show that the non-existence of
an equivalent separating probability measure does not, in fact, mean that a
combined-cost arbitrage opportunity must be present.

\section{Notation and preliminaries\label{Sect:mfd61oa0}}

Let $T$ be a positive integer and let $\left(  \Omega,\Sigma,\mathbb{P}%
\right)  $ be a finite probability space equipped with a filtration
$\mathcal{F}=\left(  \mathcal{F}_{t}\right)  _{t=0}^{T}$. We assume (without
loss of generality) that the \textbf{physical measure}~$\mathbb{P}$ satisfies
the condition $\mathbb{P}(A)>0$ for each non-empty $A\in\mathcal{F}_{T}$, and
the sigma-field $\mathcal{F}_{0}$ has a single atom, that is, $\mathcal{F}%
_{0}=\{\emptyset,\Omega\}$. We refer to the atoms of~$\mathcal{F}_{t}$ as the
\textbf{nodes} at time $t=0,\ldots,T$, and write~$\Lambda_{t}$ for the set of
nodes at time $t=0,\ldots,T$. For any non-terminal node $\lambda\in\Lambda
_{t}$, where $t=0,\ldots,T-1$, we denote by $\mathrm{succ}(\lambda)$ the set
of \textbf{successor nodes} of~$\lambda$, that is, nodes $\mu\in\Lambda_{t+1}$
such that $\mu\subset\lambda$.

For each $t=0,\ldots,T$, we can identify any $\mathcal{F}_{t}$-measurable
random variable~$X$ with a function on~$\Lambda_{t}$, and will
write~$X^{\lambda}$ for the value of~$X$ at a node $\lambda\in\Lambda_{t}$.

We shall say that
\[
Q:=\{Q_{t}^{\lambda}\,|\,t=0,\ldots,T-1,\lambda\in\Lambda_{t}\}
\]
is a \textbf{family of absolutely continuous single-step probability measures}
whenever~$Q_{t}^{\lambda}$ is a probability measure defined on the
sigma-field
\[
\lambda\cap\mathcal{F}_{t+1}:=\left\{  \lambda\cap A\,|\,A\in\mathcal{F}%
_{t+1}\right\}
\]
for each $t=0,\ldots,T-1$ and $\lambda\in\Lambda_{t}$. Note that absolute
continuity of these measures with respect to~$\mathbb{P}$ is automatically
ensured by the assumption that $\mathbb{P}(A)>0$ for each non-empty
$A\in\mathcal{F}_{T}$. Such a family of measures gives rise to a unique
probability measure$~\mathbb{Q}$ defined on\ the sigma-field$~\mathcal{F}_{T}$
by%
\begin{equation}
\mathbb{Q}(\lambda):=\prod_{t=0}^{T-1}Q_{t}^{\lambda_{t}}(\lambda_{t+1})
\label{Eq:hfn74ba9}%
\end{equation}
for any $\lambda\in\Lambda_{T}$, where $\lambda_{t}\in\Lambda_{t}$ for
$t=0,\ldots,T$ is the unique sequence of nodes such that $\lambda_{0}%
\supset\cdots\supset\lambda_{T}=\lambda$. In general, the family $Q$ is a
richer object than the corresponding measure~$\mathbb{Q}$ in that it carries
more information at those nodes~$\lambda$ where $\mathbb{Q}(\lambda)=0$.

Furthermore, we shall say that an adapted process $S$ is a \textbf{martingale}
with respect to the family of measures~$Q$ if%
\begin{equation}
S_{t}^{\lambda}=\sum_{\mu\in\mathrm{succ}(\lambda)}Q_{t}^{\lambda}(\mu
)S_{t+1}^{\mu} \label{Eq:nfdmm0am2}%
\end{equation}
for each $t=0,\ldots,T-1$ and $\lambda\in\Lambda_{t}$. This condition implies
that, in particular, $S$~is a martingale (in the usual sense) under the
probability measure~$\mathbb{Q}$ related to the family~$Q$
by~(\ref{Eq:hfn74ba9}).

Families of absolutely continuous single-step probability measures and
martingales with respect to such families of measures will be used to
characterise the absence of arbitrage in a market model with combined (fixed
and proportional) transaction costs; see Theorem~\ref{Thm:0sjfd67fbd}.

\section{Model with fixed and proportional costs}

Let $A$, $B$ and $C$ be $\mathbb{R}$-valued processes adapted to the
filtration $\mathcal{F}$ such that $0<B\leq A<\infty$ and $0<C<\infty$. We
refer to this collection of processes together with the filtration as a
\textbf{combined-cost model}, in which $A,B$ play the respective roles of ask
and bid stock prices, with $C$ representing fixed transaction costs.

The notions of solvency and self-financing can be formalised as follows in the
combined-cost model.

\begin{definition}
\label{Def:hf6gsc}\upshape

\begin{enumerate}
\item[$1)$] We say that a portfolio $(x,y)\in\mathbb{R}^{2}$ of cash and stock
is \textbf{combined-cost solvent} at time $t=0,\ldots,T$ and node $\lambda
\in\Lambda_{t}$ when liquidating the stock position leaves a non-negative cash
amount
\[
x+B_{t}^{\lambda}y^{+}-A_{t}^{\lambda}y^{-}-C_{t}^{\lambda}\geq0
\]
after the fixed transaction cost~$C_{t}^{\lambda}$ is met, or when both the
cash and stock positions are non-negative to begin with, that is,
\[
x,y\geq0.
\]
We denote by~$\mathcal{G}_{t}^{\lambda}$\ the set of such portfolios $(x,y)$.

\item[$2)$] We define a \textbf{combined-cost self-financing strategy} as an
$\mathbb{R}^{2}$-valued $\mathcal{F}$-predictable process $(X,Y)=\{(X_{t}%
,Y_{t})\}_{t=0}^{T+1}$ such that%
\[
(X_{t}^{\lambda}-X_{t+1}^{\lambda},Y_{t}^{\lambda}-Y_{t+1}^{\lambda}%
)\in\mathcal{G}_{t}^{\lambda}%
\]
for each $t=0,\ldots,T$ and $\lambda\in\Lambda_{t}$.
\end{enumerate}
\end{definition}

\begin{remark}
\label{Rem:jf745bga}\upshape We can also consider the \textbf{combined-cost
liquidation value}%
\[
L_{t}^{\lambda}(x,y):=x+(B_{t}^{\lambda}y^{+}-A_{t}^{\lambda}y^{-}%
-C_{t}^{\lambda})\mathbf{1}_{y\notin\lbrack0,C_{t}^{\lambda}/B_{t}^{\lambda}]}%
\]
of a portfolio $(x,y)\in\mathbb{R}^{2}$ at time $t=0,\ldots,T$ and node
$\lambda\in\Lambda_{t}$. Observe that $(x,y)\in\mathcal{G}_{t}^{\lambda}$ is
equivalent to $L_{t}^{\lambda}(x,y)\geq0$. Figure~\ref{fig:anf64nag} shows a
typical set $\mathcal{G}_{t}^{\lambda}$ of combined-cost solvent portfolios.
\end{remark}

\begin{figure}[t]
\begin{center}
\begin{tikzpicture}[scale=1.0]
\draw[draw=none,fill=lightgray,opacity=0.5] (-1.5,2.75)--(0,0.5)--(0,0)--(1,0)--(3,-1)--(3,2.75)--cycle;
\draw[black,thick] (-1.5,2.75)--(0,0.5)--(0,0)--(1,0)--(3,-1);
\draw[black] (1.2,1.4)node[anchor=west] {$\mathcal{G}^\lambda_t$};
\draw [->] (-2,0)--(3,0) node (yaxis) [above left] {$y$};
\draw [->] (0,-1.5)--(0,2.75) node (yaxis) [below right] {$x$};
\draw (1,0)node[anchor=north] {$2$};
\draw (0,0.5)node[anchor=east] {$1$};
\draw (0,0)node[anchor=north east] {$0$};
\end{tikzpicture}
\end{center}
\caption{Set $\mathcal{G}^{\lambda}_{t}$ of combined-cost solvent portfolios
$(x,y)$ with $A^{\lambda}_{t}=1.5$, $B^{\lambda}_{t}=0.5$, $C^{\lambda}%
_{t}=1.0$.}%
\label{fig:anf64nag}%
\end{figure}
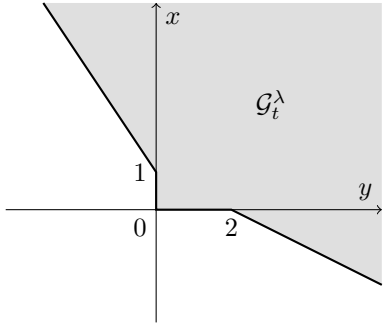

\section{Fundamental Theorem of Asset Pricing under fixed and proportional
costs\label{Sect:bf7dtd}}

\begin{definition}
\label{Def:mfs6eehn}\upshape We say that a combined-cost self-financing
strategy $(X,Y)$ is a \textbf{combined-cost arbitrage opportunity} whenever
the following conditions hold:

\begin{enumerate}
\item[$1)$] $(X_{0},Y_{0})=(0,0)$,

\item[$2)$] $X_{T+1}\geq0$ and $Y_{T+1}\geq0$,

\item[$3)$] $X_{T+1}^{\lambda}>0$ for some $\lambda\in\Lambda_{T}$.
\end{enumerate}
\end{definition}

\begin{remark}
\upshape The absence of combined-cost arbitrage can be described in terms of
the liquidation value introduced in Remark~\ref{Rem:jf745bga}. Namely, there
is no combined-cost arbitrage opportunity if and only if $L_{T}(X_{T}%
,Y_{T})=0$ for every combined-cost self-financing strategy $(X,Y)$ such that
$(X_{0},Y_{0})=(0,0)$ and $L_{T}(X_{T},Y_{T})\geq0$. This is a direct
extension of the classical no-arbitrage (NA) condition as in \cite{KabStr2001}%
, \cite{Sch2004} and others.
\end{remark}

The Fundamental Theorem of Asset Pricing extends to the case of a
combined-cost model as follows.

\begin{theorem}
\label{Thm:0sjfd67fbd}The following conditions are equivalent:

\begin{enumerate}
\item[$1)$] There is no combined-cost arbitrage opportunity in the model with
ask and bid prices $A,B$ and fixed costs~$C$;

\item[$2)$] There exist an adapted process~$S$ and a family of absolutely
continuous single-step probability measures~$Q$ such that $S$ is a martingale
with respect to~the family~$Q$ and $B\leq S\leq A$.
\end{enumerate}
\end{theorem}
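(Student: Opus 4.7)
For $(2) \Rightarrow (1)$ I would run a $Q$-supermartingale argument at every node of the tree, not only on the support of the induced measure $\mathbb{Q}$. Given $S,Q$ as in $(2)$ and a combined-cost self-financing strategy $(X,Y)$ with $(X_0,Y_0)=(0,0)$ and $X_{T+1},Y_{T+1}\ge 0$, put $U_t^\lambda := X_t^\lambda + S_t^\lambda Y_t^\lambda$ and $V_t^\lambda := X_{t+1}^\lambda + S_t^\lambda Y_{t+1}^\lambda$. The sandwich $B\le S\le A$ gives the one-line lemma that $x+S_t^\lambda y\ge 0$ on $\mathcal{G}_t^\lambda$, with the stronger $x+S_t^\lambda y\ge C_t^\lambda>0$ on its liquidation branch; applied to the rebalancing increment this yields $U_t^\lambda\ge V_t^\lambda$. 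Combined with the identity $V_{t-1}^{\lambda'}=\sum_{\mu\in\mathrm{succ}(\lambda')}Q_{t-1}^{\lambda'}(\mu)\,U_t^\mu$, which follows from the $\mathcal{F}_{t-1}$-measurability of $(X_t,Y_t)$ and the martingale identity for $S$, this shows that $V$ is a $Q$-supermartingale. Since $V_T\ge 0$, backward induction through the full family $Q$ forces $V_t\ge 0$ at every node, while $(X_0,Y_0)=(0,0)$ gives $V_0\le 0$; hence $V_0=0$. Once the incoming position at a node is $(0,0)$, the zero-loss condition forces the rebalancing to be trivial, since a liquidation-branch move loses at least $C>0$ while a ``both-non-negative'' move loses a strictly positive amount unless the increment itself vanishes. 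Propagating node-by-node gives $(X_{t+1},Y_{t+1})=(0,0)$ at every $t$ and every node, so no arbitrage is possible.

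For $(1) \Rightarrow (2)$ my plan is a backward induction on $t$ that constructs the single-step measures $Q_t^\lambda$ and the martingale values $S_t^\lambda$ simultaneously. At each node $\lambda\in\Lambda_t$ I would carry an auxiliary set $\mathcal{H}_t^\lambda\subset\mathbb{R}^2$ consisting of those positions $(x,y)$ from which some combined-cost self-financing continuation inside $\lambda$ terminates with non-negative coordinates at every descendant leaf. Combined-cost NA at the root is precisely the statement that $\mathcal{H}_0$ does not meet the positive orthant away from the origin, so a separation of $\mathcal{H}_t^\lambda$ from a suitable shift of the positive orthant by a closed hyperplane would yield a normal vector whose components, once normalised, give the probabilities $Q_t^\lambda(\mu)$ on $\mathrm{succ}(\lambda)$ and a martingale value $S_t^\lambda\in[B_t^\lambda,A_t^\lambda]$, with the bid--ask bounds emerging from the extreme rays of the liquidation branch of $\mathcal{G}_t^\lambda$.

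The main obstacle is the compounded effect of the two difficulties highlighted in the introduction. Fixed costs break convexity of $\mathcal{G}_t^\lambda$, and hence of $\mathcal{H}_t^\lambda$, blocking any direct appeal to Hahn--Banach; I would handle this by decomposing $\mathcal{G}_t^\lambda$ into the orthant $\mathbb{R}_+\times\mathbb{R}_+$ and the closed convex half-plane $\{x+B_t^\lambda y^+ - A_t^\lambda y^-\ge C_t^\lambda\}$, treating each piece separately and then combining the resulting local measures. Proportional costs in turn mean that node-wise single-step NA is strictly weaker than full multi-step combined-cost NA, so the separation must be applied to the continuation set $\mathcal{H}_t^\lambda$ rather than to the one-step solvency set, and one must first verify that $\mathcal{H}_t^\lambda$ is closed. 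Establishing this closedness under the composite non-convex geometry is the step I expect to be the genuine hurdle, and is where the ``fresh ideas'' promised by the authors will have to enter.
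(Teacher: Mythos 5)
Your argument for $(2)\Rightarrow(1)$ is correct and is essentially the paper's proof in a slightly tidier packaging: the observation that $B\leq S\leq A$ gives $x+S_{t}^{\lambda}y\geq0$ on $\mathcal{G}_{t}^{\lambda}$ (with the stronger bound $C_{t}^{\lambda}$ on the liquidation branch), the identity $V_{t-1}^{\lambda'}=\sum_{\mu}Q_{t-1}^{\lambda'}(\mu)U_{t}^{\mu}$ coming from predictability plus the martingale property, the backward induction giving non-negativity at every node, and the forward propagation of the zero portfolio correspond exactly to (\ref{Eq:o9fja7man1})--(\ref{Eq:nf6agem1}) and the concluding maximal-time argument in the paper. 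That half needs no repair.

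The direction $(1)\Rightarrow(2)$ is where the genuine gap lies: what you offer is a programme, not a proof, and you yourself defer the decisive steps (closedness of $\mathcal{H}_{t}^{\lambda}$, and how to separate a non-convex set) to ``fresh ideas''. More importantly, the separation route is structurally ill-suited here. Hahn--Banach applies only after convexification, and under fixed costs convexification destroys exactly the information you need: scaling a position up makes the fixed charge negligible relative to the proportional gain, so the convex hull of the attainable (or continuation) set can reach the boundary of the positive orthant even when no arbitrage exists --- this is precisely the phenomenon exhibited in Example~\ref{Exl:nf7am0ax}, where the model is arbitrage-free yet no separating probability measure equivalent to $\mathbb{P}$ exists. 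Your proposed remedy, separating the orthant piece and the half-plane piece of $\mathcal{G}_{t}^{\lambda}$ separately, produces two unrelated families of normals (any vector in the dual of the positive orthant for the first piece; only normals proportional to $(1,s)$ with $s\in[B_{t}^{\lambda},A_{t}^{\lambda}]$ for the second), and you give no mechanism for reconciling them into the single pair $(Q_{t}^{\lambda},S_{t}^{\lambda})$ that condition $2)$ demands; the possibly degenerate, merely absolutely continuous measures that the theorem requires are exactly the objects a separation argument struggles to deliver.

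The paper avoids all of this with an elementary explicit construction that you should compare with your plan. It sets $U_{T}:=A_{T}$, $V_{T}:=B_{T}$ and recursively $U_{t-1}:=\max_{\mu\in\mathrm{succ}(\lambda)}(U_{t}^{\mu}\wedge A_{t}^{\mu})$, $V_{t-1}:=\min_{\mu\in\mathrm{succ}(\lambda)}(V_{t}^{\mu}\vee B_{t}^{\mu})$; shows that $U_{t}\geq A_{\sigma}$ and $V_{t}\leq B_{\tau}$ for suitable stopping times $\sigma,\tau>t$; and deduces the key sandwich $V_{t}\vee B_{t}\leq U_{t}\wedge A_{t}$ from no-arbitrage, the point being that a violation would let one buy at $A_{t}$ and sell at $B_{\tau}>A_{t}$ (or sell at $B_{t}$ and buy back at $A_{\sigma}<B_{t}$) in a position so large that the profit swamps the two fixed charges. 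The process $S$ with $B\leq S\leq A$ and the two-point (possibly degenerate) single-step measures $Q_{t}^{\lambda}$ are then built forward by hand inside these intervals. No separation theorem, no convexity and no closedness argument is needed; the large-position trick is the ``fresh idea'' that neutralises the fixed costs, and it enters at the level of the sandwich (\ref{Eq:nfd64ma0}) rather than at the level of any continuation set.
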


\begin{proof}
To prove the implication $1)\Rightarrow2)$, assume that there is no
combined-cost arbitrage opportunity. We begin by constructing two adapted
processes $U$ and~$V$ by backward induction:%
\[
U_{T}^{\lambda}:=A_{T}^{\lambda},\quad V_{T}^{\lambda}:=B_{T}^{\lambda}%
\]
for each $\lambda\in\Lambda_{T}$, and%
\[
U_{t-1}^{\lambda}:=\max_{\mu\in\mathrm{succ}(\lambda)}(U_{t}^{\mu}\wedge
A_{t}^{\mu}),\quad V_{t-1}^{\lambda}:=\min_{\mu\in\mathrm{succ}(\lambda
)}(V_{t}^{\mu}\vee B_{t}^{\mu})
\]
for each $t=1,\ldots,T$ and $\lambda\in\Lambda_{t-1}$.

Having constructed the processes $U$ and~$V$, we claim that for each
$t=0,\ldots,T-1$ there exist stopping times $\sigma,\tau>t$ such that%
\[
U_{t}\geq A_{\sigma},\quad V_{t}\leq B_{\tau}.
\]
We prove the existence of~$\sigma$ by backward induction. For~$\tau$ the
argument is similar and will be omitted for brevity. For $t=T-1$ we get
$U_{T-1}\geq A_{\sigma}$ by putting $\sigma:=T$. Now suppose that for some
$t=1,\ldots,T-1$ we have already established that there is a stopping time
$\eta>t$ such that $U_{t}\geq A_{\eta}$. Let us put%
\[
\sigma:=\eta\mathbf{1}_{\left\{  A_{t}>U_{t}\right\}  }+t\mathbf{1}_{\left\{
A_{t}\leq U_{t}\right\}  }.
\]
It follows that%
\[
U_{t-1}\geq U_{t}\wedge A_{t}=U_{t}\mathbf{1}_{\left\{  A_{t}>U_{t}\right\}
}+A_{t}\mathbf{1}_{\left\{  A_{t}\leq U_{t}\right\}  }\geq A_{\eta}%
\mathbf{1}_{\left\{  A_{t}>U_{t}\right\}  }+A_{t}\mathbf{1}_{\left\{
A_{t}\leq U_{t}\right\}  }=A_{\sigma},
\]
completing the proof of the claim.

Next we show that
\begin{equation}
V_{t}\vee B_{t}\leq U_{t}\wedge A_{t}\label{Eq:nfd64ma0}%
\end{equation}
for each $t=0,...,T$. Suppose that this were not so, and take the largest
$t=0,...,T$ such that (\ref{Eq:nfd64ma0}) is violated. Since $U_{T}=A_{T}$ and
$V_{T}=B_{T}$, it follows that $t<T$. It also follows that $V_{t+1}\vee
B_{t+1}\leq U_{t+1}\wedge A_{t+1}$, which implies that $V_{t}\leq U_{t}$.
Moreover, we know that $B_{t}\leq A_{t}$. Hence, for (\ref{Eq:nfd64ma0}) to be
violated, at least one of the following two inequalities would have to hold at
some node $\lambda\in\Lambda_{t}$:

\begin{itemize}
\item $V_{t}^{\lambda}>A_{t}^{\lambda}$. We know that there is a stopping time
$\tau>t$ such that $B_{\tau}\geq V_{t}$, so $B_{\tau}>A_{t}$ on~$\lambda$. In
this case the strategy to buy a large enough position in stock for~$A_{t}%
^{\lambda}$ at time~$t$ and node~$\lambda$, and to sell it for~$B_{\tau}$ at
time~$\tau$ for any scenario belonging to~$\lambda$ (and otherwise to do
nothing) would be a combined-cost arbitrage opportunity. To be precise, such a
strategy $(X,Y)$ could be defined as%
\[
(X_{s},Y_{s}):=\left\{
\begin{array}
[c]{ll}%
(0,0) & \text{for }s=0,\ldots,t,\\
\mathbf{1}_{\lambda}(-A_{t}z-C_{t},z) & \text{for }s=t+1,\ldots,\tau,\\
\mathbf{1}_{\lambda}(-A_{t}z+B_{\tau}z-C_{t}-C_{\tau},0) & \text{for }%
s=\tau+1,\ldots,T+1,
\end{array}
\right.
\]
for a large enough $z>0$ so that $\left(  -A_{t}+B_{\tau}\right)
z>C_{t}+C_{\tau}$ on~$\lambda$.

\item $U_{t}^{\lambda}<B_{t}^{\lambda}$. We know that there is a stopping time
$\sigma>t$ such that $A_{\sigma}\leq U_{t}$, so $A_{\sigma}<B_{t}$
on~$\lambda$. The strategy $(X,Y)$ defined as%
\[
(X_{s},Y_{s}):=\left\{
\begin{array}
[c]{ll}%
(0,0) & \text{for }s=0,\ldots,t,\\
\mathbf{1}_{\lambda}(B_{t}z-C_{t},-z) & \text{for }s=t+1,\ldots,\sigma,\\
\mathbf{1}_{\lambda}(B_{t}z-A_{\sigma}z-C_{t}-C_{\sigma},0) & \text{for
}s=\sigma+1,\ldots,T+1,
\end{array}
\right.
\]
would be a combined-cost arbitrage opportunity when $z>0$ is large enough so
that $\left(  B_{t}-A_{\sigma}\right)  z>C_{t}+C_{\sigma}$ on~$\lambda$.
\end{itemize}

\noindent This contradicts the assumption that there is no combined-cost
arbitrage opportunity. Claim (\ref{Eq:nfd64ma0}) has therefore been proved.

We are ready to construct a process~$S$ and a family of single-step
probability measures~$Q$ by induction. At time $t=0$ we take any value%
\[
S_{0}\in\lbrack V_{0}\vee B_{0},U_{0}\wedge A_{0}].
\]
Now suppose that an $\mathcal{F}_{t}$-measurable random variable $S_{t}%
\in\lbrack V_{t}\vee B_{t},U_{t}\wedge A_{t}]$ has already been constructed
for some $t=0,\ldots,T-1$. For each $\lambda\in\Lambda_{t}$ we have%
\[
V_{t+1}^{\nu}\vee B_{t+1}^{\nu}= V_{t}^{\lambda}\leq S_{t}^{\lambda}\leq
U_{t}^{\lambda}= U_{t+1}^{\mu}\wedge A_{t+1}^{\mu}%
\]
for some $\mu,\nu\in\mathrm{succ}(\lambda)$. If $\mu\neq\nu$, we put%
\[
S_{t+1}^{\mu}:=U_{t+1}^{\mu}\wedge A_{t+1}^{\mu},\quad S_{t+1}^{\nu}%
:=V_{t+1}^{\nu}\vee B_{t+1}^{\nu}%
\]
and, for any $\eta\in\mathrm{succ}(\lambda)$ other than $\mu$ or~$\nu$, we
take as~$S_{t+1}^{\eta}$ any value%
\[
S_{t+1}^{\eta}\in\lbrack V_{t+1}^{\eta}\vee B_{t+1}^{\eta},U_{t+1}^{\eta
}\wedge A_{t+1}^{\eta}].
\]
This means that
\[
\min_{\mu\in\mathrm{succ}(\lambda)}S_{t+1}^{\mu}\leq S_{t}^{\lambda}\leq
\max_{\mu\in\mathrm{succ}(\lambda)}S_{t+1}^{\mu},
\]
so there is a probability measure~$Q_{t}^{\lambda}$ on the
sigma-field~$\lambda\cap\mathcal{F}_{t+1}$ such that (\ref{Eq:nfdmm0am2})
holds. But if $\mu=\nu$, then we put%
\[
S_{t+1}^{\mu}:=S_{0}%
\]
and, for any $\eta\in\mathrm{succ}(\lambda)$ other than~$\mu$, we take
as~$S_{t+1}^{\eta}$ any value%
\[
S_{t+1}^{\eta}\in\lbrack V_{t+1}^{\eta}\vee B_{t+1}^{\eta},U_{t+1}^{\eta
}\wedge A_{t+1}^{\eta}].
\]
Moreover, we put $Q_{t}^{\lambda}(\mu):=1$ and $Q_{t}^{\lambda}(\eta):=0$ for
any $\eta\in\mathrm{succ}(\lambda)$ other than~$\mu$, which defines a
probability measure~$Q_{t}^{\lambda}$ on the sigma-field~$\lambda
\cap\mathcal{F}_{t+1}$ such that (\ref{Eq:nfdmm0am2}) holds. This construction
produces an adapted process\ $S$ such that $B\leq S\leq A$, and a family of
absolutely continuous single-step probability measures~$Q$ such that $S$ is a
martingale with respect to~the family~$Q$. The implication $1)\Rightarrow2)$
has been proved.

Conversely, to verify that $2)\Rightarrow1)$, we assume that condition~$2)$
holds and, to argue by \emph{reductio ad absurdum}, suppose that there is a
combined-cost arbitrage opportunity $(X,Y)$. Condition~$2)$ implies that%
\begin{equation}
X_{t}^{\lambda}+S_{t-1}^{\lambda}Y_{t}^{\lambda}\geq\min_{\mu\in
\mathrm{succ}(\lambda)}(X_{t}^{\lambda}+S_{t}^{\mu}Y_{t}^{\lambda})
\label{Eq:o9fja7man1}%
\end{equation}
for each $t=1,\ldots,T$ and $\lambda\in\Lambda_{t-1}$. Indeed, if this
inequality failed for some $t=1,\ldots,T$ and $\lambda\in\Lambda_{t-1}$, then
we would have%
\[
S_{t}^{\lambda}<\min_{\mu\in\mathrm{succ}(\lambda)}S_{t+1}^{\mu}\quad
\text{or}\quad S_{t}^{\lambda}>\max_{\mu\in\mathrm{succ}(\lambda)}S_{t+1}%
^{\mu},
\]
depending on whether $Y_{t}^{\lambda}>0$ or $Y_{t}^{\lambda}<0$. In either
case it would mean that there is no probability measure~$Q_{t}^{\lambda}$ on
the sigma-field~$\lambda\cap\mathcal{F}_{t+1}$\ such that (\ref{Eq:nfdmm0am2})
holds, violating condition~$2)$ of the theorem. Next, since $(X,Y)$ is a
combined-cost self-financing strategy, it follows that, for each
$t=0,\ldots,T$ and $\lambda\in\Lambda_{t}$,%
\begin{align}
&  X_{t}^{\lambda}-X_{t+1}^{\lambda}+S_{t}^{\lambda}(Y_{t}^{\lambda}%
-Y_{t+1}^{\lambda})\nonumber\\
&  \quad\quad\quad\geq X_{t}^{\lambda}-X_{t+1}^{\lambda}+B_{t}^{\lambda}%
(Y_{t}^{\lambda}-Y_{t+1}^{\lambda})^{+}-A_{t}^{\lambda}(Y_{t}^{\lambda
}-Y_{t+1}^{\lambda})^{-}\nonumber\\
&  \quad\quad\quad\geq C_{t}^{\lambda}>0 \label{Eq:nfyatp0m}%
\end{align}
or%
\begin{equation}
X_{t}^{\lambda}\geq X_{t+1}^{\lambda},\quad Y_{t}^{\lambda}\geq Y_{t+1}%
^{\lambda}. \label{Eq:yrteramtad}%
\end{equation}
Hence%
\begin{equation}
X_{t}+S_{t}Y_{t}\geq X_{t+1}+S_{t}Y_{t+1} \label{Eq:lfmf90as74n1}%
\end{equation}
for each $t=0,\ldots,T$. We can show by backward induction that%
\begin{equation}
X_{t+1}+S_{t}Y_{t+1}\geq0 \label{Eq:nf6agem1}%
\end{equation}
for each $t=0,\ldots,T$. Clearly, (\ref{Eq:nf6agem1}) holds for $t=T$, given
that\ $X_{T+1}\geq0$ and $Y_{T+1}\geq0$. Now suppose that~(\ref{Eq:nf6agem1})
holds for some $t=1,\ldots,T$. Take any $\lambda\in\Lambda_{t-1}$. Then%
\[
X_{t}^{\lambda}+S_{t-1}^{\lambda}Y_{t}^{\lambda}\geq\min_{\mu\in
\mathrm{succ}(\lambda)}(X_{t}^{\lambda}+S_{t}^{\mu}Y_{t}^{\lambda})\geq
\min_{\mu\in\mathrm{succ}(\lambda)}(X_{t+1}^{\lambda}+S_{t}^{\mu}%
Y_{t+1}^{\lambda})\geq0,
\]
where the first inequality holds by~(\ref{Eq:o9fja7man1}), the second
by~(\ref{Eq:lfmf90as74n1}) and the last one by the induction hypothesis,
completing the backward induction argument.

To proceed further, let us put%
\[
t:=\max\{s=0,\ldots,T+1\,|\,X_{0}\geq\cdots\geq X_{s}\text{ and }Y_{0}%
\geq\cdots\geq Y_{s}\}.
\]
Since $(X,Y)$ is a combined-cost arbitrage opportunity, we know that
$X_{0}=Y_{0}=0$ and $X_{T+1}^{\lambda}>0$ for some $\lambda\in\Lambda_{T}$. If
$t=T+1$, it would mean that $0=X_{0}\geq X_{T+1}^{\lambda}>0$, a
contradiction. On the other hand, if $t\leq T$, then there would be a
$\lambda\in\Lambda_{t}$ such that (\ref{Eq:yrteramtad}) fails, so
(\ref{Eq:nfyatp0m}) would have to hold, implying that%
\[
X_{t}^{\lambda}+S_{t}^{\lambda}Y_{t}^{\lambda}>X_{t+1}^{\lambda}%
+S_{t}^{\lambda}Y_{t+1}^{\lambda}\geq0,
\]
where the last inequality follows from~(\ref{Eq:nf6agem1}). However, that too
is impossible as $X_{t}\leq X_{0}=0$ and $Y_{t}\leq Y_{0}=0$. This
contradiction completes the proof.
\end{proof}

\section{Fixed costs}

A \textbf{fixed-cost model} involves two processes $S$ and $C$ adapted to the
filtration~$\mathcal{F}$, where $0<S<\infty$ represents the stock prices and
$0<C<\infty$ the fixed transaction costs. This is a special case of the
combined-cost model when the ask and bid prices coincide. Hence, fixed-cost
solvent portfolios, fixed-cost self-financing strategies and fixed-cost
arbitrage opportunities are covered by Definitions~\ref{Def:hf6gsc}
and~\ref{Def:mfs6eehn} with $A:=B:=S$. In this case
Theorem~\ref{Thm:0sjfd67fbd} reduces to the following result.

\begin{corollary}
\label{Cor:bd64ma0n}The following conditions are equivalent:

\begin{enumerate}
\item[$1)$] There is no fixed-cost arbitrage opportunity in the model with
stock prices~$S$ and fixed costs~$C$;

\item[$2)$] There exists a family of absolutely continuous single-step
probability measures~$Q$ such that $S$ is a martingale with respect to~$Q$.
\end{enumerate}
\end{corollary}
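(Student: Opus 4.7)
The plan is to deduce this corollary directly from Theorem~\ref{Thm:0sjfd67fbd}, since the fixed-cost model is exactly the combined-cost model with the additional constraint $A=B=S$. Under this specialisation, Definitions~\ref{Def:hf6gsc} and~\ref{Def:mfs6eehn} coincide with the fixed-cost notions, so the absence of fixed-cost arbitrage in the model with stock prices~$S$ and fixed costs~$C$ is equivalent, by definition, to the absence of combined-cost arbitrage in the model with $A:=S$, $B:=S$ and $C$. Both implications of the corollary should follow almost formally.

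For $1)\Rightarrow 2)$, I would invoke Theorem~\ref{Thm:0sjfd67fbd} to obtain an adapted process $S'$ and a family of absolutely continuous single-step probability measures~$Q$ such that $S'$ is a martingale with respect to~$Q$ and $B\leq S'\leq A$. Since $A=B=S$, the sandwich inequality collapses to $S\leq S'\leq S$, forcing $S'=S$. Hence $S$ itself is a martingale with respect to~$Q$, giving condition~$2)$ of the corollary.

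For the converse $2)\Rightarrow 1)$, given a family~$Q$ with $S$ a martingale with respect to~$Q$, I would simply take $S':=S$ as the candidate process for condition~$2)$ of the theorem. Because $B=S=S=A$, the double inequality $B\leq S'\leq A$ is trivially satisfied, so Theorem~\ref{Thm:0sjfd67fbd} yields the absence of combined-cost arbitrage in the model with $A:=S$, $B:=S$ and $C$, which is by definition the absence of fixed-cost arbitrage.

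There is no genuine obstacle here: the work has all been done in Theorem~\ref{Thm:0sjfd67fbd}, and the corollary is a specialisation obtained by observing that the bid--ask sandwich degenerates. The only thing worth stating explicitly is the identification of the fixed-cost notions as instances of the combined-cost notions with $A=B=S$, which has already been recorded in the paragraph preceding the corollary.
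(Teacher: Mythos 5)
Your proposal is correct and matches the paper's own treatment: the corollary is obtained precisely by specialising Theorem~\ref{Thm:0sjfd67fbd} to $A:=B:=S$, where the sandwich $B\leq S'\leq A$ forces $S'=S$. The paper records exactly this identification in the paragraph preceding the corollary and offers no further argument.
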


This version of the Fundamental Theorem of Asset Pricing under fixed costs is
similar to that obtained by Jouini, Kallal and Napp \cite{JouKalNap2001}.
However, our method of proof (the proof of Theorem~\ref{Thm:0sjfd67fbd}) is
different. Moreover, the equivalent condition for the lack of fixed-cost
arbitrage is expressed in terms of single-step measures only, whereas that in
\cite{JouKalNap2001} relies on a larger family of measures.

As a consequence of Theorem~\ref{Thm:0sjfd67fbd}, together with
Corollary~\ref{Cor:bd64ma0n}, we also obtain an alternative characterisation
of the lack of combined-cost arbitrage in terms of an embedded arbitrage-free
fixed-cost model. It resembles earlier results for proportional transaction
costs, which involve embedding an arbitrage-free frictionless model; for
example, see\ Roux \cite{Rou2011}.

\begin{corollary}
\label{Cor:f8f56sgfsmac}The following conditions are equivalent:

\begin{enumerate}
\item[$1)$] There is no combined-cost arbitrage opportunity in the model with
ask and bid prices $A,B$ and fixed costs~$C$;

\item[$2)$] There exists a process~$S$ adapted to the filtration~$\mathcal{F}$
such that $B\leq S\leq A$ and the model with stock prices $S$ and fixed
costs$~C$ admits no fixed-cost arbitrage opportunity.
\end{enumerate}
\end{corollary}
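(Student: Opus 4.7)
The plan is to derive this corollary by chaining Theorem~\ref{Thm:0sjfd67fbd} with Corollary~\ref{Cor:bd64ma0n}, so that the combined-cost characterisation in terms of a martingale family~$Q$ is split into two halves: an adapted process~$S$ sandwiched between $B$ and $A$, and, independently, the martingale/no-arbitrage condition for the fixed-cost model with stock price~$S$ and cost~$C$.

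For the implication $1)\Rightarrow 2)$, I would apply the direction $1)\Rightarrow 2)$ of Theorem~\ref{Thm:0sjfd67fbd} to obtain an adapted process~$S$ with $B\le S\le A$ and a family~$Q$ of absolutely continuous single-step probability measures with respect to which $S$ is a martingale. This $S$ is then exactly the candidate for the fixed-cost model in~$2)$: applying the direction $2)\Rightarrow 1)$ of Corollary~\ref{Cor:bd64ma0n} to the pair $(S,C)$ immediately gives that the fixed-cost model with stock prices~$S$ and fixed costs~$C$ admits no fixed-cost arbitrage.

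For the converse $2)\Rightarrow 1)$, I would take the process~$S$ provided by~$2)$, apply the direction $1)\Rightarrow 2)$ of Corollary~\ref{Cor:bd64ma0n} to pull out a family~$Q$ of absolutely continuous single-step measures under which $S$ is a martingale, and then feed the pair $(S,Q)$ into the direction $2)\Rightarrow 1)$ of Theorem~\ref{Thm:0sjfd67fbd}, using $B\le S\le A$ to conclude the absence of combined-cost arbitrage. An equally clean alternative, which I would mention, is to observe directly that whenever $B\le S\le A$, the inequalities $B y^{+}-A y^{-}\le S y^{+}-S y^{-}=Sy$ show that any combined-cost solvent portfolio in $\mathcal{G}_t^\lambda$ is also fixed-cost solvent at price~$S$; hence every combined-cost self-financing strategy is fixed-cost self-financing, and every combined-cost arbitrage is a fortiori a fixed-cost arbitrage.

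There is no real obstacle here, since all the work has already been done in Theorem~\ref{Thm:0sjfd67fbd} (whose $1)\Rightarrow 2)$ direction supplies precisely the sandwich process~$S$) and in Corollary~\ref{Cor:bd64ma0n} (which translates between $S$-martingale families and fixed-cost no-arbitrage). The only point worth a line of justification is that the $S$ produced by Theorem~\ref{Thm:0sjfd67fbd} can be used with the \emph{same} family~$Q$ in Corollary~\ref{Cor:bd64ma0n}, which is immediate because the martingale condition (\ref{Eq:nfdmm0am2}) for $S$ under~$Q$ is the hypothesis of that corollary.
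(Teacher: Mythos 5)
Your proposal is correct and follows exactly the route the paper intends: the corollary is stated there as an immediate consequence of chaining Theorem~\ref{Thm:0sjfd67fbd} with Corollary~\ref{Cor:bd64ma0n} in both directions, which is precisely what you do. Your additional direct observation for $2)\Rightarrow 1)$ (that $B\le S\le A$ makes every combined-cost solvent portfolio fixed-cost solvent at price $S$, so any combined-cost arbitrage would already be a fixed-cost arbitrage) is also valid, but the main argument coincides with the paper's.
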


\section{Equivalent separating probability measures}

In the context of the present paper, we also refer to the recent article by
Lepinette and Tran \cite{LepTra2017}, where the absence of asymptotic
arbitrage in a class of non-convex models (which include models with fixed and
proportional costs) is characterised by the existence of an \textbf{equivalent
separating probability measure} (ESPM). By definition, an ESPM is a
probability measure~$\mathbb{Q}$ equivalent to~$\mathbb{P}$ such that%
\begin{equation}
\mathbb{E}_{\mathbb{Q}}[L_{T}(X_{T+1},Y_{T+1})]\leq0 \label{Eq:0tue7wnam}%
\end{equation}
for all self-financing strategies $(X,Y)$ starting with initial endowment
$(X_{0},Y_{0})=(0,0)$, where $L_{T}(X_{T+1},Y_{T+1})$ is the liquidation value
(see Remark~\ref{Rem:jf745bga}) of the terminal portfolio $(X_{T+1},Y_{T+1})$.
The following example shows that it is possible for a model to be free of
combined-cost (or fixed-cost) arbitrage opportunities though there is no ESPM.

We conclude that ESPM's are unsuitable to characterise the lack of
combined-cost arbitrage. Indeed, the absence of combined-cost arbitrage
opportunities is consistent with the no-arbitrage (NA) condition in
Definition~5.1 of \cite{LepTra2017}. Until now, no suitable analogue of risk
neutral probabilities has been put forward to characterise this kind of NA
condition. This is resolved by our Theorem~\ref{Thm:0sjfd67fbd}, according to
which families of absolutely continuous single-step martingale measures can
play this role.

\begin{example}
\label{Exl:nf7am0ax}\upshape Consider the single-step model with stock prices
$S=A=B$ and fixed costs $C$ such that%
\[%
\begin{array}
[c]{lll}
&  & S_{1}^{\mathrm{u}}=2,C_{1}^{\mathrm{u}}=1\\
& \nearrow & \\
S_{0}=1,C_{0}=1 & \rightarrow & S_{1}^{\mathrm{d}}=1,C_{1}^{\mathrm{d}}=1
\end{array}
\]
By Theorem~\ref{Thm:0sjfd67fbd}, this model does not admit a combined-costs
(or indeed fixed-cost) arbitrage opportunity, as there is a family~$Q$ of
absolutely continuous single-step measures such that $S$ is a martingale with
respect that family; the family consists of just one measure~$Q_{0}$ such that
$Q_{0}(\mathrm{u})=1$ and $Q_{0}(\mathrm{d})=0$. Observe that
\[%
\begin{array}
[c]{lllll}
&  &  &  & (X_{2}^{\mathrm{u}},Y_{2}^{\mathrm{u}})=(-y-1,y)\\
&  &  & \nearrow & \\
(X_{0},Y_{0})=(0,0) & \rightarrow & (X_{1},Y_{1})=(-y-1,y) & \rightarrow &
(X_{2}^{\mathrm{d}},Y_{2}^{\mathrm{d}})=(-y-1,y)
\end{array}
\]
is a combined-cost self-financing strategy for any $y>1$. Suppose that
$\mathbb{Q}$ is an ESPM, so we have $0<\mathbb{Q}(\mathrm{u})<1$. Hence%
\[
\mathbb{E}_{\mathbb{Q}}[L_{1}(X_{2},Y_{2})]=\mathbb{Q}(\mathrm{u})\left(
y-2\right)  +(1-\mathbb{Q}(\mathrm{u}))\left(  -2\right)  =qy-2
\]
when $y>0$. Taking $y>\frac{2}{\mathbb{Q}(\mathrm{u})}$, we get $\mathbb{E}%
_{\mathbb{Q}}[L_{1}(X_{2},Y_{2})]>0$, contradicting~(\ref{Eq:0tue7wnam}).
\end{example}

\section{Concluding remarks}

In this work the classical Fundamental Theorem of Asset Pricing due to
Harrison and Pliska \cite{HarPli1981} is extended to discrete market models
with simultaneous fixed and proportional transaction costs and finite state
space. This also extends later work on the Fundamental Theorem of Asset
Pricing under proportional costs such as \cite{JouKal1995}, \cite{KabStr2001},
\cite{Ort2001}, \cite{Rou2011}, and under fixed costs \cite{JouKalNap2001}.

Developments for models with infinite state space and/or continuous time
and/or several assets are likely to follow. Moreover, as the Fundamental
Theorem of Asset Pricing has now been established for markets with
simultaneous fixed and proportional costs, it will inform research on pricing
and hedging derivative securities in this setting.

\end{document}